\definecolor{webgreen}{rgb}{0,.5,0}
\definecolor{webbrown}{rgb}{.6,0,0}
\newcommand{\seqnum}[1]{\href{http://oeis.org/#1}{\underline{#1}}}
\begin{document}


\theoremstyle{plain}
\newtheorem{theorem}{Theorem}
\newtheorem{corollary}[theorem]{Corollary}
\newtheorem{lemma}[theorem]{Lemma}
\newtheorem{proposition}[theorem]{Proposition}

\theoremstyle{definition}
\newtheorem{definition}[theorem]{Definition}
\newtheorem{example}[theorem]{Example}
\newtheorem{conjecture}[theorem]{Conjecture}

\theoremstyle{remark}
\newtheorem{remark}[theorem]{Remark}

\begin{center}
\vskip 1cm{\LARGE\bf 
2-Free Tetranacci Sequences
}
\vskip 1cm
\large
Jeremy F.~Alm \\ 
Department of Mathematics\\
 Lamar University\\
 Beaumont, TX 77710\\
USA \\
\href{mailto:alm.academic@gmail.com}{\tt alm.academic@gmail.com} \\
\ \\

Taylor Herald\\
Department of Computer Science\\
 Washington University\\
 St. Louis, MO 63130\\
USA \\
\href{mailto: herald.r@wustl.edu}{\tt herald.r@wustl.edu} \\
\ \\

Ellen Rammelkamp Miller\\
Board of Trustees\\ 
Illinois College\\
Jacksonville, IL 62650 \\
USA \\
\href{mailto: ellenmiller7@gmail.com}{\tt ellenmiller7@gmail.com} \\
\ \\

Dave Sexton \\
Levi, Ray \& Shoup, Inc.\\
Springfield, IL 62704 \\
USA \\
\href{mailto: sexton.davey@mail.ic.edu}{\tt sexton.davey@mail.ic.edu}
\end{center}

\vskip .2in

\begin{abstract}
We consider a variant on the Tetranacci sequence, where one adds the previous four terms, then divides the sum by two until the result is odd.  We give an algorithm for constructing ``initially division-poor'' sequences, where over an initial segment one divides by two only once for each term.  We develop a probabilistic model that suggests that ``most'' sequences are unbounded, and provide computational data to support the underlying assumptions of the model.
\end{abstract}

\section{Introduction}
\label{intro}

In their 2014 paper \cite{AK}, Avila and Khovanova define $n$-free Fibonacci numbers via the recurrence,  \emph{add the  previous two terms and divide by  the largest power of $n$ dividing the sum}.  For example, a 2-free sequence beginning with $2,3$ is as follows:
\[
    2,3,5,1,3,1,1,\ldots
\]

A 3-free sequence beginning with $2,3$ is as follows:
\[
    2,3,5,8,13,7,20,9,29,\ldots
\]
Avila and Khovanova prove that 2-free sequences are eventually constant, and conjecture that 3-free sequences are all eventually periodic. 

Guy, Khovanova, and Salazar \cite{GKS} study a different variant of Fibonacci-like sequences that they call subprime Fibonacci sequences---a variant suggested by Conway.  To compute a term of a subprime Fibonacci sequence, one takes the sum of the previous two terms and, if the sum is composite, divides by its smallest prime divisor.  They study periodic subprime Fibonacci sequences and derive many interesting results; however, they are unable to prove that any such sequence has infinite range.  Indeed, it is difficult to imagine how one might prove such a thing.  The question, ``Do all subprime Fibonacci sequences eventually end in a cycle?"  may belong to the class of extremely difficult (possibly even formally unsolvable) problems  \cite{Conway}; one such example is the generalized Collatz problem, which was shown  to be undecidable \cite{KS}.

The first two authors of the present paper studied a more tractable version of subprime-like Fibonacci sequences that we called \emph{prime Fibonacci sequences} \cite{MR3473261}. In this version, one takes the sum of the previous two terms and returns the smallest odd prime divisor of that sum.  We showed that every such sequence terminates in a power of 2,  but they can be extended infinitely to the left, i.e., ``backwards''.

In the present paper, we consider 2-free Tetranacci sequences, that is, sequences where one adds the previous four terms, then divides the sum by two until the result is odd. (See Definition \ref{def:main}.)  

\begin{definition}\label{def:main}
Let $a_1, a_2, a_3, a_4$ be odd positive integers. Then $(a_i)_{i=1}^\infty$ is a 2-free Tetranacci sequence if for all $i\geq 1$,
$$a_{i+4}=\frac{a_i + a_{i+1} + a_{i+2} + a_{i+3}}{2^{d_i}}$$
where $d_i$ is the largest integer such that $2^{d_i}$ divides $a_i + a_{i+1} + a_{i+2} + a_{i+3}$.
\end{definition}

In the rest of this paper, we try to determine the long-term behavior of these sequences.

\section{Periodic sequences}\label{periodic}
In this section, we consider possible periods of 2-free Tetranacci sequences.
\begin{theorem}
If a 2-free Tetranacci sequence has period $p < 5$, then it has period 1.
\end{theorem}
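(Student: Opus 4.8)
The plan is to treat the four possible periods $p\in\{1,2,3,4\}$ separately, noting that the cases with $p\mid 4$ (i.e.\ $p\in\{1,2,4\}$) are essentially automatic because the relevant ``window sums'' are constant, while $p=3$ — the only case with $p\nmid 4$ — requires a short Diophantine argument. Throughout, recall that every term of a 2-free Tetranacci sequence is a positive odd integer.

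First I would dispose of $p\in\{1,2,4\}$ at once. If the sequence has period $p$ with $p\mid 4$, then for every $i$ the indices $i,i+1,i+2,i+3$ represent each residue class modulo $p$ exactly $4/p$ times, so the window sum $a_i+a_{i+1}+a_{i+2}+a_{i+3}$ equals the constant $M:=\tfrac{4}{p}(a_1+\cdots+a_p)$. Hence $d_i$ is independent of $i$ — call the common value $v$ — and $a_{i+4}=M/2^{v}$ for all $i\ge 1$; in particular $a_5=a_6=a_7=a_8$. But $p\mid 4$ also gives $a_5=a_1$, $a_6=a_2$, $a_7=a_3$, $a_8=a_4$, so $a_1=a_2=a_3=a_4$, the repeating block is a single value, and the sequence has period $1$.

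The substantive case is $p=3$. Write $a_1=a$, $a_2=b$, $a_3=c$ (positive and odd) and $T=a+b+c$. Periodicity forces $a_4=a$, $a_5=b$, $a_6=c$, $a_7=a$, and unwinding the recurrence at $i=1,2,3$ turns these into
\[
b\cdot 2^{d_1}=T+a,\qquad c\cdot 2^{d_2}=T+b,\qquad a\cdot 2^{d_3}=T+c .
\]
Each right-hand side is even, so $d_1,d_2,d_3\ge 1$. After cyclically relabelling I may assume $a=\max\{a,b,c\}$. From the third equation, $a\cdot 2^{d_3}=a+b+2c\le 4a$, so $d_3\in\{1,2\}$. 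If $d_3=2$ then $3a=b+2c$, which together with $b,c\le a$ forces $a=b=c$, and we are done. If $d_3=1$ then $a=b+2c$, hence $T=2b+3c$; substituting into the first two equations gives $b(2^{d_1}-3)=5c$ and $c(2^{d_2}-3)=3b$, so both left factors are positive, $d_1,d_2\ge 2$, and multiplying yields $(2^{d_1}-3)(2^{d_2}-3)=15$. For $d\ge 2$ the value $2^d-3$ lies in $\{1,5,13,29,\ldots\}$, and no two such values have product $15$; this contradiction rules out $d_3=1$. Hence $a=b=c$ and the sequence has period $1$.

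The only real work is the $p=3$ case, and within it the crux is the elementary but slightly delicate reduction of the three ``sum equations'' to the single constraint $(2^{d_1}-3)(2^{d_2}-3)=15$ together with the observation that it has no solution in powers of two; the rest is bookkeeping. One should also check (it is immediate, since the recurrence is deterministic going forward, so $(a_4,a_5,a_6,a_7)=(a_1,a_2,a_3,a_4)$ propagates) that the displayed system really does capture periodicity of period $3$, so that solving it is equivalent to classifying such sequences.
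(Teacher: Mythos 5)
Your proof is correct, but it follows a genuinely different route from the paper's. For $p\in\{1,2,4\}$ the paper simply declares the cases straightforward and leaves them to the reader, whereas you give a clean uniform argument: when $p\mid 4$ the window sum is constant, so all terms from $a_5$ on coincide, and periodicity pulls this back to $a_1=a_2=a_3=a_4$. For $p=3$, the paper sums its three cycle equations and divides by $4$ to get $a+b+c=a\cdot 2^{\alpha}+b\cdot 2^{\beta}+c\cdot 2^{\gamma}$ with $\alpha,\beta,\gamma\geq -1$, then runs a case analysis on the exponents and finishes by observing that $\alpha=\beta=\gamma=0$ makes each of $a,b,c$ a weighted average of the other two. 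You instead normalize by a cyclic shift so that $a=\max\{a,b,c\}$ (legitimate, since shifting a period-3 sequence gives another such sequence), use the maximality to force $d_3\in\{1,2\}$, settle $d_3=2$ by the equality case of $2c+b\leq 3a$, and eliminate $d_3=1$ via the substitution $a=b+2c$, which reduces the remaining two equations to $b(2^{d_1}-3)=5c$ and $c(2^{d_2}-3)=3b$ and hence to the unsolvable equation $(2^{d_1}-3)(2^{d_2}-3)=15$ in powers of two; all of these steps check out. The paper's averaging trick is shorter and avoids any normalization, while your argument is more explicit about where the obstruction lies (a concrete Diophantine impossibility) and has the added merit of actually proving the $p=2$ and $p=4$ cases rather than omitting them. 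One small remark: your closing comment about the system being \emph{equivalent} to period-3 periodicity is unnecessary for the theorem; you only need the forward implication (periodicity implies the system), which you already established.
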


\begin{proof}
The cases $p=2$ and $p=4$ are quite straightforward, so we leave those proofs to the reader.

Suppose $a,b,c,a,b,c,\ldots$ is a period-3 sequence.  Then we have
the following:
\begin{align}
  a+b+c+a &=b\cdot 2^i \label{eq:1}\\
  b+c+a+b &=c\cdot 2^j \label{eq:2}\\
  c+a+b+c &=a\cdot 2^k  \label{eq:3}
\end{align}
From these equations we get $a+b+c=a\cdot 2^{k-2}+b\cdot
2^{i-2}+c\cdot 2^{j-2}$.  Let $\alpha=k-2,\beta=i-2$, and
$\gamma=j-2$, so we have
\begin{equation}\label{abc}
  a+b+c=a\cdot 2^\alpha +b\cdot 2^\beta +c\cdot 2^\gamma,
\end{equation}
with $\alpha,\beta,\gamma\geq -1.$ We cannot have all of
$\alpha,\beta, \text{ and }\gamma$ greater than zero, since that
would make the right-hand-side of (\ref{abc}) greater than the
left-hand-side.  Also, if any of $\alpha,\beta,\gamma$ is equal to
$-1$, then two of them are, since $a,b$, and $c$ are odd yet the
right-hand-side is an integer.  Thus we have three cases:
\begin{itemize}
  \item $\alpha=\beta=\gamma=0$
  \item $\alpha>0,\beta\geq 0,\gamma=0$
  \item $\alpha>0,\beta=\gamma=-1$
\end{itemize}
The second case is easy to rule out.  Suppose $\alpha>0,\beta\geq
0$, and $\gamma=0$.  Then we have $a+b+c=a\cdot 2^\alpha +b\cdot
2^\beta+c$, so $a+b=a\cdot 2^\alpha+b\cdot 2^\beta$.  Now if
$\beta>0$ we have a contradiction, so suppose $\beta=0$, so we get
$a=a\cdot 2^\alpha$, a contradiction.

Let us consider the third case, where $\alpha>0$ and
$\beta=\gamma=-1$.  Then $a+b+c=a\cdot 2^\alpha+\frac{b+c}{2}$, and
\begin{equation}\label{2a}
  2a+b+c=a\cdot 2^{\alpha+1}.
\end{equation}

Now since $\alpha>0$ we have $2c+a+b\equiv_8 0$, and since
$\beta=\gamma=-1$ we have $2a+b+c\equiv_4 2b+a+c\equiv_4 2$.
Rearranging (\ref{2a}) we have $b+c=2a(2^\alpha -1)$.  Hence
$2a+b+c=2a+2a(2^\alpha -1)=a\cdot 2^{\alpha+1}\equiv_4 0$ since
$\alpha>0$.  But this is a contradiction.

Therefore it must be the case that $\alpha=\beta=\gamma=0$. Then (\ref{eq:1})-(\ref{eq:3}) express the fact that each of $a$, $b$, and $c$ is a weighted average of the other two. Therefore $a=b=c$.
\end{proof}

Note that $3, 3, 1, 1, 1\dots$ is a period-5 sequence.  In fact, it appears to be possibly the only nontrivial periodic sequence: every period-5 sequence we found was of the form $3a,3a,a,a,a\dots$, and we found no sequences with period larger than 5. 


\section{Initially division-poor sequences}\label{divpoor}
In this section, we show that there are sequences for which $d_i=1$ over arbitrarily long initial segments.

Let $\alpha$ be the positive real root of $2x^4-x^3-x^2-x-1=0$, the characteristic equation for (\ref{eq:divpoor}) below.  (Note that $\alpha$ is irrational.)  We can approximate $\alpha$ by
a rational $r=\frac{p}{q}$ such that $|\alpha-r |\leq\frac{1}{q^2}$.
Now, if we never divided by a higher power of 2 than $2^1$, the recursion for
our sequence would be
\begin{equation}\label{eq:divpoor}
a_{n+4}=\frac{a_{n+3}+a_{n+2}+a_{n+1}+a_n}{2},
\end{equation}
 and hence $a_n$ would
grow like $c\cdot \alpha^n$ for some constant $c>0$.  We will use
this idea to build a sequence that has $d_i=1$ for some initial
segment, by using backward recursion to make $a_n$ approximate
$c\cdot \alpha^n$ over that initial segment.  We will control the
backward propagation of error by increasing $q$, hence making $r$ a
better approximation to $\alpha$.

\begin{theorem}
  For any $N>0$ there is a 2-free Tetranacci sequence $(a_i)$ 
   such that for all $i\leq N$, $d_i=1$.
\end{theorem}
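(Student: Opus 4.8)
\emph{Proof proposal.} The plan is to build the sequence \emph{backwards}, starting from four large odd ``seed'' values chosen close to the dominant growth ray of (\ref{eq:divpoor}), and then to continue forwards past index $N+4$ with the actual 2-free rule. Fix $N$. The first step is to collect the relevant facts about the characteristic polynomial $2x^{4}-x^{3}-x^{2}-x-1$: it has four distinct roots, its dominant root is the unique positive real root $\alpha>1$, and the remaining three roots all have modulus strictly less than $1$ (a routine check). Thus every solution of (\ref{eq:divpoor}) has the form $a_n=\gamma_\alpha\alpha^{n}+r_n$, where $r_n$ is a combination of $\lambda^{n}$ over the three roots $\lambda$ with $|\lambda|<1$, and the coefficient vector $(\gamma_\lambda)$ is a fixed linear function of any four consecutive values $(a_{m+1},a_{m+2},a_{m+3},a_{m+4})$, namely the image under the inverse of the invertible (Vandermonde-type) matrix $[\lambda^{m+j}]_{1\le j\le 4}$, whose norm depends only on $m$.

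Next, fix a rational approximation $r=p/q$ to $\alpha$ with $|\alpha-r|\le 1/q^{2}$, with $q$ large (to be pinned down at the end), put $M=N+4$, and let $a_{N+1},\dots,a_{N+4}$ be the nearest \emph{odd} integers to $q^{M}r^{N+1},\dots,q^{M}r^{N+4}$ respectively; each $q^{M}r^{N+j}=p^{N+j}q^{4-j}$ is already a positive integer, so this rounding changes the value by at most $1$. Define $a_{N},a_{N-1},\dots,a_{1}$ by the backward form of (\ref{eq:divpoor}), namely $a_n=2a_{n+4}-a_{n+1}-a_{n+2}-a_{n+3}$, and define $a_n$ for $n>N+4$ by Definition \ref{def:main}. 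Two easy remarks then reduce the theorem to a positivity statement. Reducing the backward recursion modulo $2$ gives $a_n\equiv a_{n+1}+a_{n+2}+a_{n+3}\pmod 2$, so, the seed being odd, a downward induction shows $a_{1},\dots,a_{N+4}$ are all odd. Hence for $1\le n\le N$ we have $a_n+a_{n+1}+a_{n+2}+a_{n+3}=2a_{n+4}$ with $a_{n+4}$ odd, i.e.\ the sum is $\equiv 2\pmod 4$; this is precisely the statement $d_n=1$. Therefore, \emph{provided every $a_n$ is positive}, $(a_i)_{i\ge 1}$ is a genuine 2-free Tetranacci sequence with $d_i=1$ for all $i\le N$.

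It remains to choose $q$ so that $a_{1},\dots,a_{N}$ are positive (the seed values are positive, and positivity for $n>N+4$ is automatic from the 2-free rule). Apply the closed form above with $m=N$. Had the seed been exactly $q^{M}\alpha^{N+j}$, we would get $\gamma_\alpha=q^{M}$ and $\gamma_\lambda=0$ for $\lambda\ne\alpha$. The true seed differs from $q^{M}\alpha^{N+j}$ by at most $|q^{M}r^{N+j}-q^{M}\alpha^{N+j}|+1\le C_N q^{M-2}$ (using $|\alpha-r|\le q^{-2}$, for $q$ large), and the coefficient map is linear with norm depending only on $N$, so $|\gamma_\alpha-q^{M}|\le C_N' q^{M-2}$ and $|\gamma_\lambda|\le C_N' q^{M-2}$ for $\lambda\ne\alpha$, with $C_N'$ depending only on $N$. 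Since $\alpha>1$ and the other three roots have modulus $<1$, for $1\le n\le N$ the subdominant terms contribute at most $3C_N' q^{M-2}$, whence $a_n=q^{M}\alpha^{n}+E_n$ with $|E_n|\le C_N'' q^{M-2}$, and so $a_n\ge q^{M}\alpha-C_N'' q^{M-2}=q^{M-2}(\alpha q^{2}-C_N'')$. This is positive as soon as $q^{2}>C_N''/\alpha$, so fixing any such $q$ for which a rational approximation of the stated quality exists (for instance a continued-fraction convergent of $\alpha$ with large enough denominator) completes the construction.

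The delicate part is this last step: controlling the \emph{backward} propagation of error. Running a linear recursion backwards amplifies exactly the modes belonging to the roots of smallest modulus, so one must know that those modes start off small enough, relative to the dominant mode $\gamma_\alpha\alpha^{N}$, that they cannot drive any of $a_{1},\dots,a_{N}$ below zero. The bound quantifying how $(\gamma_\lambda)$ depends on the seed carries a constant that grows with $N$ (via negative powers of the subdominant roots), but this is harmless since $N$ is fixed; it is precisely why $q$ --- equivalently, the quality of the rational approximation to $\alpha$ --- must be taken larger as $N$ grows, which is the mechanism indicated in the discussion preceding the theorem.
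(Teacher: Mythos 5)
Your proposal is correct, and its skeleton is the same as the paper's: approximate the dominant ray of (\ref{eq:divpoor}) by a rational $r=p/q$ with $|\alpha-r|\le 1/q^2$, seed four nearby odd integers, run the exact backward recursion $a_n=2a_{n+4}-(a_{n+1}+a_{n+2}+a_{n+3})$ to create the initial segment, observe that oddness of all terms forces each sum to be exactly $2\cdot(\text{odd})$, hence $d_i=1$, and reduce everything to keeping the backward terms positive. Where you genuinely diverge is in how positivity is controlled. The paper argues step by step: a single backward step amplifies the deviation from the ideal geometric sequence by at most a factor of $5$, and comparing $5^{n+1}\epsilon$ with $c\,\alpha^{-(n+1)}$ yields an explicit admissible segment length of order $\log q/\log(5\alpha)$ for a seed of size about $q^3$. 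You instead diagonalize the recurrence: write $a_n=\gamma_\alpha\alpha^n+\sum_\lambda\gamma_\lambda\lambda^n$ over the four distinct roots, bound $|\gamma_\alpha-q^{N+4}|$ and the subdominant coefficients via the inverse of the (generalized Vandermonde) matrix attached to the seed positions, and conclude positivity on $1\le n\le N$ once $q^2$ beats an $N$-dependent constant. Both are valid; the paper's elementary bound buys an explicit quantitative trade-off between $q$ and the length of the division-poor segment (which matters for its computational experiments), while your spectral argument is conceptually cleaner and more clearly generalizable, at the cost of untracked constants (inverse-Vandermonde norms and $\alpha^N$, exponential in $N$) that force the ``choose $q$ after $N$'' order of quantifiers --- which is all the theorem requires. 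Two further points in your favor: you make explicit the oddness/$d_i=1$ bookkeeping and the rounding of the seed to odd integers, which the paper leaves implicit (its seeds $2k+1,\,2kr+1,\dots$ are odd by construction); and your appeal to the subdominant roots having modulus less than $1$, while true and routine to verify, is not even needed --- for fixed $N$ you only use boundedness of $|\lambda|^n$ on $1\le n\le N$, so the argument survives with constants depending on $N$ regardless.
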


\begin{proof}
  Imagine that for some $x>0$, $a_4\approx x$,
  $a_3\approx\frac{x}{\alpha}$, $a_2\approx\frac{x}{\alpha^2}$, and
  $a_1\approx\frac{x}{\alpha^3}$.  We wish to define $a_0=2\cdot
  a_4-[a_1+a_2+a_3]$.  Suppose that $a_4$ is within $\epsilon$ of
  $x$, $a_3$ within $\epsilon$ of $\frac{x}{\alpha}$, etc.  We want to
  bound the error for $a_0$, i.e., $|a_0-\frac{x}{\alpha^4}|$.
\begin{align*}
\left|a_0-\frac{x}{\alpha^4}\right| &=\left|2a_4-(a_3+a_2+a_1)-\frac{x}{\alpha^4}\right|\\
&=\left|2a_4-(a_3+a_2+a_1)-\left[2x-\left(\frac{x}{\alpha}+\frac{x}{\alpha^2}+\frac{x}{\alpha^3}\right)\right]\right|\\
&\leq 2|a_4-x|+\left|a_3-\frac{x}{\alpha}\right|+\left|a_2-\frac{x}{\alpha^2}\right|+\left|a_1-\frac{x}{\alpha^3}\right|\\
&\leq 5\cdot\epsilon.
\end{align*}

    This shows that error propagation is at most
  5-fold.  Now we define $a_1,a_2,a_3,$ and $a_4$.

  Let $r=\frac{p}{q}$ be such that
  $|\alpha-r|\leq\frac{1}{q^2}$.  For $k=q^3$, let
\begin{align*}
  a_1 &=2k+1,\\
  a_2 &=r 2k+1,\\
  a_3 &=r^2 2k+1,\\
  a_4 &=r^3 2k+1.
\end{align*}
Then for $n\leq 0$, we may define $a_n=2\cdot
a_{n+4}-(a_{n+3}+a_{n+2}+a_{n+1})$, so that
$$a_{n+4}=\frac{a_{n+3}+a_{n+2}+a_{n+1}}{2}.$$  Thus we may ``go
backward" for as many terms as we like, so long as the $a_n$'s are
positive.  Since the $a_n$'s are approximately $c\cdot \alpha^n$, to
ensure that $a_n>0$, we need $5^{n+1}\cdot\epsilon\leq c\cdot
\alpha^{-(n+1)}$, where $c=a_1=2k+1$ and $\epsilon$ is the max error
among the terms $a_2,a_3,a_4$, i.e., $\epsilon$ is the largest of
\begin{align*}
  &|a_2-\alpha a_1|,\\
  &|a_3-\alpha^2 a_1|,\text{ and}\\
  &|a_4-\alpha^3 a_1|.
\end{align*}
It is easy to check that $|a_4-\alpha^3a_1|$ is the largest, and
that
\begin{align*}
  |a_4-\alpha^3a_1| &=|r^32k+1-\alpha^3(2k+1)|\\
  &\leq 2k|r^3-\alpha^3|+|\alpha^3-1|\\
  &\leq 2k|r -\alpha||r^2+r \alpha+\alpha^2|+|\alpha^3-1|\\
  &\leq 2q^3\cdot\frac{1}{q^2}\cdot (3\alpha^2+o(1))+1.5\\
  &\leq (6+o(1))q.
\end{align*}
Hence $\epsilon \leq(6+o(1))q$.

Now we need $5^{n+1}\cdot\epsilon\leq a_1\cdot \alpha^{-(n+1)}$;  since we have
$5^{n+1}\cdot\epsilon\leq 5^{n+1}(6+o(1))q$, then
\begin{align}
  5^{n+1}(6+o(1))q\leq(2q^3+1)\alpha^{-n+1}
  &\Longleftrightarrow (5\alpha)^{n+1}\leq\frac{2q^3+1}{(6+o(1))q}\\
  &\Longleftrightarrow n+1\leq\log
  \left[\frac{2q^3+1}{(6+o(1))q}\right]/\log(5\alpha),\label{RHS}
\end{align}
and the right-hand side of (\ref{RHS})  clearly grows without bound.
\end{proof}

Practical implementation of the above algorithm presents some problems, since it is difficult to get arbitrarily good rational approximations to $\alpha$; the best approximation (in terms of decimal digits) we can get with Python is 
\[
1.3490344565611562810403256662539206445217132568359375
\]
The longest initially division-poor sequence we could obtain from this approximation was 59 terms long. This was achieved using

\begin{align*}
r &= [1; 2, 1, 6, 2, 2, 3, 1, 1, 1, 1, 2, 1, 1, 67, 1, 1, 1, 5, 3, 3] \\ 
&= \dfrac{217560407}{161271201}
\end{align*}
as the approximation to $\alpha$.  This approximation of $\alpha$ is accurate to 16 decimal places (actual error about $2\times 10^{-17}$), but has denominator with only nine digits. Although we were able to obtain more convergents for  $\alpha$, they did not increase the length of the initially division-poor sequence so obtained. 

\begin{figure}[H]
\centering
\includegraphics[width=5in]{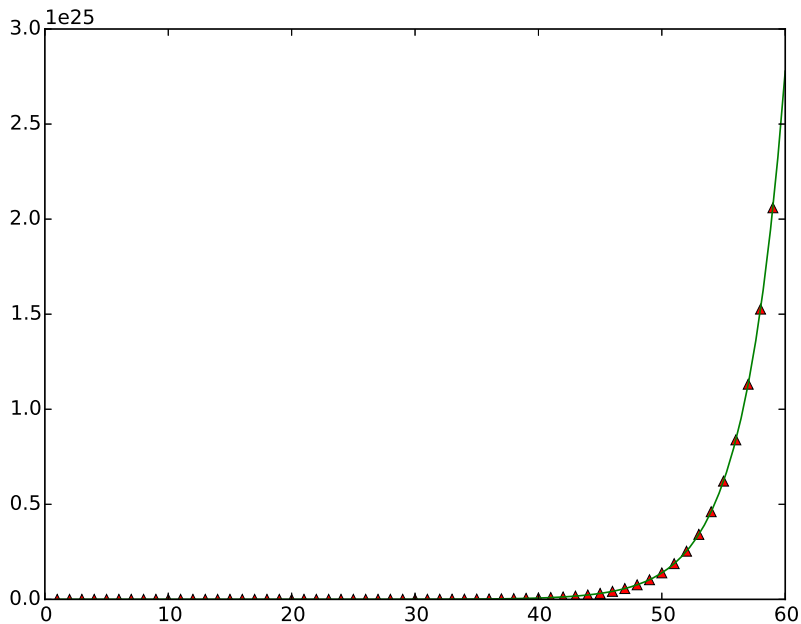}
\caption{59 division-poor terms, plotted against $y=438944974655058688r^t$}
\label{fig:divpoor}
\end{figure}

\section{Rates of growth and a probabilistic model}\label{growth}


\begin{theorem} 
Every 2-free Tetranacci sequence is either periodic or unbounded.
\end{theorem}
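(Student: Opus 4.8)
The plan is to argue by contradiction: suppose $(a_i)$ is a 2-free Tetranacci sequence that is bounded but not periodic. Since all terms are odd positive integers bounded by some $M$, each 4-tuple $(a_i,a_{i+1},a_{i+2},a_{i+3})$ lies in a finite set, so by pigeonhole some 4-tuple repeats; because the recurrence is deterministic going forward, once a 4-tuple repeats the sequence is eventually periodic. Thus ``bounded'' already forces ``eventually periodic,'' and the real content is to promote \emph{eventually} periodic to \emph{periodic} — i.e., to show the recurrence can be run backwards uniquely so that no pre-periodic tail can exist.

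The key step is reversibility of the recurrence on odd tuples. Given $(a_{n+1},a_{n+2},a_{n+3},a_{n+4})$ with all entries odd, I want to recover $a_n$ uniquely. We must have
\[
a_{n+4}=\frac{a_n+a_{n+1}+a_{n+2}+a_{n+3}}{2^{d_n}},
\]
so $a_n = 2^{d_n} a_{n+4} - (a_{n+1}+a_{n+2}+a_{n+3})$ for the correct $d_n\ge 1$. The sum $a_{n+1}+a_{n+2}+a_{n+3}$ is odd (sum of three odd numbers), and $a_n$ must be odd, so $2^{d_n}a_{n+4}$ must be even, forcing $d_n\ge 1$ — consistent — but we need uniqueness of $d_n$. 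Here is where one uses that $a_{n+4}$ is odd: the value of $d_n$ is the 2-adic valuation of $a_n+a_{n+1}+a_{n+2}+a_{n+3}=2^{d_n}a_{n+4}$, which is exactly $d_n$ since $a_{n+4}$ is odd; but a priori several values of $d_n$ could yield a positive odd $a_n$. The crucial observation is that the map is injective on the (finite) set of tuples appearing in a bounded sequence: if two distinct indices $m<n$ had $(a_{m+1},\dots,a_{m+4})=(a_{n+1},\dots,a_{n+4})$ then by forward determinism the sequence is periodic from index $n$ (indeed from $m$), and then stepping back one term, the tuple at index $m$ and the tuple at index $n$ both map forward to this common tuple; if additionally $(a_m,\dots,a_{m+3})=(a_n,\dots,a_{n+3})$ we can iterate, but if not we still need backward uniqueness to conclude the pre-period vanishes. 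I would therefore prove directly that on the bounded (finite) range, the ``shift'' map $(a_i,\dots,a_{i+3})\mapsto(a_{i+1},\dots,a_{i+4})$ is a bijection: it is a self-map of a finite set, so it suffices to show it is injective. Injectivity amounts to showing that $a_n$ is determined by $(a_{n+1},a_{n+2},a_{n+3},a_{n+4})$, i.e., the correct power $d_n$ is forced — and among all $d\ge 1$ with $2^d a_{n+4}-(a_{n+1}+a_{n+2}+a_{n+3})$ positive and at most $M$, at most one can actually occur, because it is pinned down as $v_2$ of that quantity, which grows with $d$; more carefully, if two different $a_n, a_n'$ both work then $a_n+a_{n+1}+a_{n+2}+a_{n+3}$ and $a_n'+a_{n+1}+a_{n+2}+a_{n+3}$ are two powers of two times the same odd number $a_{n+4}$, hence one divides the other, so one of $a_n, a_n'$ is at least twice the other plus a fixed amount — I'd show this forces a contradiction with boundedness once we are deep enough inside the periodic part.

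The cleaner route, which I expect to use, is: establish eventual periodicity from boundedness (pigeonhole, as above), let $P$ be a period and look at the sequence on the periodic cycle $C=\{(a_i,\dots,a_{i+3}): i \text{ large}\}$; the shift map restricted to $C$ is a surjection from the cycle onto itself, hence a bijection on $C$, so each tuple on the cycle has a unique shift-preimage \emph{on the cycle}. Now I claim the whole sequence is inside $C$: if the pre-periodic part were nonempty, the last pre-periodic tuple $T_0$ would shift to some tuple $T_1\in C$; but $T_1$ already has a preimage $T_0'\in C$, and I must show $T_0=T_0'$. This is exactly the backward-uniqueness statement, and proving it is the main obstacle. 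To settle it I would show: if $T_0=(a_{m},a_{m+1},a_{m+2},a_{m+3})$ and $T_0'=(a_{m}',a_{m+1},a_{m+2},a_{m+3})$ both shift to $(a_{m+1},a_{m+2},a_{m+3},a_{m+4})$ with $a_{m+4}$ odd, then $a_m+a_{m+1}+a_{m+2}+a_{m+3}=2^{d}a_{m+4}$ and $a_m'+a_{m+1}+a_{m+2}+a_{m+3}=2^{d'}a_{m+4}$ with $d\neq d'$, say $d<d'$; subtracting gives $a_m'-a_m=(2^{d'}-2^{d})a_{m+4}\ge 2^{d}a_{m+4}=a_m+a_{m+1}+a_{m+2}+a_{m+3}> a_m'$ only if... — in any case $a_m' \ge a_m + (2^{d'}-2^d)a_{m+4}$, and iterating backward along the pre-periodic chain each step multiplies the relevant sum by at least $2$, contradicting boundedness of the sequence after finitely many backward steps (the pre-period is finite but each backward step would need the terms to have been bounded, forcing $d=d'$ at each stage). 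Hence $T_0=T_0'$, the pre-period is empty, and the sequence is (purely) periodic, completing the proof.
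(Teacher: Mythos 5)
Your opening paragraph---bounded range, finitely many $4$-tuples of odd values, pigeonhole gives a repeated tuple, forward determinism then gives periodicity---is exactly the paper's proof, and it is all the paper does. The genuine gap is in everything after that: your attempt to upgrade ``eventually periodic'' to ``purely periodic'' via backward uniqueness of the recurrence cannot succeed, because backward uniqueness is false and so is the strengthened conclusion. Given an odd tuple $(a_{n+1},a_{n+2},a_{n+3},a_{n+4})$, \emph{every} choice of $d\geq 1$ with $2^{d}a_{n+4}>a_{n+1}+a_{n+2}+a_{n+3}$ yields a legitimate odd positive predecessor $a_n=2^{d}a_{n+4}-(a_{n+1}+a_{n+2}+a_{n+3})$, and the forward step from it recovers $a_{n+4}$ exactly, since $a_{n+4}$ is odd forces $v_2$ of the sum to be $d$. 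Concretely, $(5,1,1,1)$ and $(1,1,1,1)$ both shift to $(1,1,1,1)$, and the sequence $5,1,1,1,1,1,\ldots$ is a bounded 2-free Tetranacci sequence that is eventually constant but not purely periodic. So your claimed identity $T_0=T_0'$ is simply not provable, and your sketched contradiction (``each backward step multiplies the relevant sum by at least $2$, contradicting boundedness'') does not apply: the pre-periodic tail is finite---in the example it is a single term---so no repeated doubling is ever forced, and a one-step discrepancy like $a_m'-a_m=(2^{d'}-2^{d})a_{m+4}$ is perfectly compatible with boundedness.

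The upshot is that the theorem must be read (as the paper's own pigeonhole argument implicitly does) with ``periodic'' meaning eventually periodic; under that reading your first paragraph is already a complete proof and coincides with the paper's. Your observation that the shift restricted to the cycle is a bijection of the cycle onto itself is correct, but it cannot absorb the pre-periodic tail into the cycle, precisely because the shift fails to be injective on the full set of odd tuples occurring in a bounded sequence.
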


This result applies more generally to sequences determined completely by a few initial terms, and is doubtless not original, but we include a proof for completeness.

\begin{proof}
Suppose the range of $(a_n)$ is bounded.  We group the terms of $(a_n)$ into blocks of 4 terms, the first being $\{a_1,a_2,a_3,a_4\}$.  Since the range of $(a_k)$ is finite, there are but finitely many distinct blocks, so there must be a repetition, i.e., there are $i$ and $k$ so that the block $\{a_i,a_{i+1},a_{i+2},a_{i+3}\}$ is the same as the block $\{a_k,a_{k+1},a_{k+2},a_{k+3}\}$.  Since 4 consecutive terms determine the remainder of the sequence, $(a_n)$ is periodic with period dividing $k-i$.
\end{proof}

Now we would like to develop a probabilistic model to study the growth rate of sequences.  Our measure will be the ``average value" of 
\[ 
a_{k+4}-\frac{a_k,a_{k+1},a_{k+2},a_{k+3}}{4}. 
\]

\begin{theorem}
Let $(a_n)$ be periodic.  Then the average value of 
\[
a_{k+4}-\dfrac{a_k,a_{k+1},a_{k+2},a_{k+3}}{4}
\]
 is zero.
\end{theorem}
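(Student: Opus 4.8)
The plan is to observe that the claimed ``average value'' depends only on the periodicity of $(a_n)$, not on the particular recurrence, so the proof reduces to a one-line telescoping computation once the bookkeeping is set up. Write $p$ for the period and set
\[
b_k = a_{k+4}-\frac{a_k+a_{k+1}+a_{k+2}+a_{k+3}}{4}.
\]
Since $(a_n)$ is $p$-periodic, so is $(b_k)$, and the average value of $b_k$ — whether one reads this as the mean over one period or as $\lim_{n\to\infty}\frac1n\sum_{k=1}^n b_k$ — equals $\frac1p\sum_{k=1}^p b_k$. The goal is to show this sum vanishes.

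First I would record the elementary fact that summing a $p$-periodic sequence over any block of $p$ consecutive indices gives the same value: for every integer $j\ge 0$,
\[
\sum_{k=1}^p a_{k+j}=\sum_{k=1}^p a_k=:S,
\]
because $k\mapsto k+j$ merely permutes the residues modulo $p$. Applying this with $j=1,2,3,4$ covers every term that appears in $\sum_{k=1}^p b_k$.

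Then the computation is immediate:
\[
\sum_{k=1}^p b_k=\sum_{k=1}^p a_{k+4}-\frac14\sum_{k=1}^p\bigl(a_k+a_{k+1}+a_{k+2}+a_{k+3}\bigr)=S-\frac14\,(S+S+S+S)=0,
\]
so the average value of $b_k$ is $0$.

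There is essentially no obstacle here; the only points requiring a moment's care are (i) fixing the interpretation of ``average value'' and noting that periodicity makes the two natural interpretations coincide, and (ii) observing that the specific 2-free Tetranacci rule — in particular the division by $2^{d_i}$ — plays no role, since the displayed quantity divides the four-term sum by $4$ rather than by $2^{d_i}$; all that is used is that the sequence repeats and that each term past the first four is determined by summing the previous four. This is exactly why, as remarked just before the statement, the result holds for any periodic sequence of this general shape.
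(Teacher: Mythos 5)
Your proof is correct and follows the same route as the paper: average over one period, and use the fact that each shifted sum $\sum_{k=1}^{p} a_{k+j}$ equals the period sum $S$, so the total is $S-\tfrac14(4S)=0$. You have simply written out in full the step the paper leaves as ``straightforward to verify,'' so there is nothing to add.
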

\begin{proof}
Let $(a_n)$ be periodic with period $p$.  Then the average value of 
\[
a_{k+4}-\dfrac{a_k,a_{k+1},a_{k+2},a_{k+3}}{4}
\]
is simply 
\[
\sum^{p-1}_{k=0}\left[a_{k+4}-\frac{a_k,a_{k+1},a_{k+2},a_{k+3}}{4}\right],
\]
where the indices are computed mod $p$.  It is straightforward to verify that the sum is zero.
\end{proof}

For an arbitrary sequence $(a_n)$, it is unclear how best to define the ``average'' value of 
\[ 
a_{k+4}-\frac{a_k,a_{k+1},a_{k+2},a_{k+3}}{4}. 
\]  
However, we can model $(a_n)$ by making the assumption that over all sequences $(a_n)$, ``half" the terms are equivalent to $1 \mod 4$, and ``half" are equivalent to $3 \mod 4$.  Similarly, we assume that each term has an equal probability of being equivalent to $1, 3, 5,$ or $7 \mod 8$, an equal probability of being equivalent to $1,3,5,7,9,11,13,$ or $15 \mod 16$, and so on.

A consequence of this is that given terms $a_1,a_2,a_3,a_4$, it follows that 
\[
a_5=\frac{a_1+a_2+a_3+a_4}{2^k},
\]
where $k$ is equal to $n$ with probability $\frac{1}{2^n}$ so half the time we divide by 2, a quarter of the time we divide by 4, an eighth of the time we divide by 8, and so on.  To see this, we will show that if $a_1+ a_2+a_3+a_4\equiv 0$ mod $2^k$, then the probability is $\frac{1}{2}$ that $a_1+ a_2+a_3+a_4\equiv 0$ mod $2^{k+1}$.  To keep the discussion concrete, we fix $k=3$, but the generalization is obvious.

So suppose $a_1+ a_2+a_3+a_4\equiv 0\pmod{8}$.  For each $i$, let $\alpha_i$ be $a_i$ reduced mod 8 (so $0\leq \alpha_i<8$).  Then $\alpha_1+\alpha_2+\alpha_3+\alpha_4\equiv 0\pmod{8}$.  Now for each $i$, either $a_i\equiv\alpha_i\pmod{16}$ or $a_i\equiv\alpha_i+8\pmod{16}$.  Let $\epsilon_i$ be $a_i-\alpha_i$ reduced mod 16, so $\epsilon_i=0$ or $\epsilon_i=8$.

Then if $\alpha_1+ \alpha_2+\alpha_3+\alpha_4\equiv 0\pmod{16}$, we have
\[
a_1+a_2+a_3+a_4\equiv 0\pmod{16}\text{ iff }\epsilon_1+\epsilon_2+\epsilon_3+\epsilon_4\equiv 0\pmod{16},
\]
and if $\alpha_1+\alpha_2+\alpha_3+\alpha_4\equiv 8\pmod{16}$,  we have
\[
a_1+a_2+a_3+a_4\equiv 0\pmod{16}\text{ iff }\epsilon_1+\epsilon_2+\epsilon_3+\epsilon_4\equiv 8\pmod{16}.
\]
Based on the probabilistic model, $\epsilon_1+\epsilon_2+\epsilon_3+\epsilon_4\equiv 0\pmod{16}$ with probability $\frac{1}{2}$, so half the time that $a_1+a_2+a_3+a_4\equiv 0\pmod{8}$, $a_1+a_2+a_3+a_4\equiv 0\pmod{16}$ as well.

Based on these probabilistic assumptions, we show that ``most" sequences are unbounded.

\begin{theorem}
Consider the probabilistic model, where 
\[
a_{k+4}=\frac{a_k+a_{k+1}+a_{k+2}+a_{k+3}}{2^{d_k}},
\]
and $d_k$ is equal to $j$ with probability $2^{-j}$.  Then the average value of 
\[
a_{k+4}-\frac{a_k+a_{k+1}+a_{k+2}+a_{k+3}}{4}
\]
is positive, so $(a_n)$ is unbounded.
\end{theorem}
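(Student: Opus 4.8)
The plan is to compute the expectation in closed form. Fix four consecutive terms and set $S = a_k + a_{k+1} + a_{k+2} + a_{k+3}$. In the model $a_{k+4} = S/2^{d_k}$, where $d_k = j$ with probability $2^{-j}$ for $j \ge 1$, so conditioning on the four preceding terms,
\[
E[a_{k+4}] \;=\; S\sum_{j=1}^{\infty} 2^{-j}\cdot 2^{-j} \;=\; S\sum_{j=1}^{\infty} 4^{-j} \;=\; \frac{S}{3}.
\]
Therefore
\[
E\!\left[a_{k+4} - \frac{a_k+a_{k+1}+a_{k+2}+a_{k+3}}{4}\right] \;=\; \frac{S}{3}-\frac{S}{4} \;=\; \frac{S}{12},
\]
which is strictly positive, since every term is a positive integer --- the model preserves positivity, a sum of positive numbers divided by a power of $2$ being positive --- so $S>0$. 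Averaging over $k$ (equivalently, taking the total expectation over the choices $d_1,d_2,\dots$) gives a positive average value, in contrast with the preceding theorem, where the average is $0$ for periodic sequences.

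To pass from positive drift to unboundedness I would take expectations throughout. Put $b_n = E[a_n]$ with the four seed terms fixed; by linearity and the computation above, $b_{n+4} = (b_n+b_{n+1}+b_{n+2}+b_{n+3})/3$ for every $n$. The $b_n$ are positive, so an elementary sliding-window estimate suffices: the minimum $m_n$ of four consecutive $b$'s is nondecreasing, and $b_{n+4}\ge \tfrac43 m_n$ forces $m_{n+4}\ge \tfrac43 m_n$, whence $b_n\to\infty$. (Equivalently, the characteristic polynomial $3x^4-x^3-x^2-x-1$ equals $-1$ at $x=1$ and is positive for large $x$, so it has a real root $>1$, which is the dominant eigenvalue since the recurrence has nonnegative coefficients.) Thus, within the model, ``most'' sequences are unbounded. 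Alternatively, and closer to the theorem's wording, a positive average for $a_{k+4}-\tfrac14(a_k+\cdots+a_{k+3})$ is incompatible with periodicity by the preceding theorem, so the dichotomy established above --- every $2$-free Tetranacci sequence is periodic or unbounded --- forces unboundedness.

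The only real content is the geometric series $\sum_{j\ge 1}4^{-j} = \tfrac13$, which is routine; the genuine difficulty is not technical but interpretive --- the step from ``positive expected per-step drift'' to ``the actual integer sequence is unbounded.'' I would address this by being explicit that the conclusion is asserted within the probabilistic model, using the expectation recurrence (or the periodic/unbounded dichotomy) as the bridge, and by flagging that the model's load-bearing hypothesis, namely equidistribution of the $2$-adic residues of the terms, is precisely what the computational evidence in the next section is meant to support.
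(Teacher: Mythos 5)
Your central computation is exactly the paper's: the paper sets $s=a_k+a_{k+1}+a_{k+2}+a_{k+3}$ and evaluates $s\sum_{i\ge 1}\bigl(\tfrac{1}{2^i}-\tfrac14\bigr)\tfrac{1}{2^i}=\tfrac{s}{12}$, which is just your $E[a_{k+4}]=S\sum_{j\ge 1}4^{-j}=S/3$ minus $S/4$ rearranged by linearity, so on the main point the proposal matches the paper. Where you go beyond the paper is the final step: the paper simply asserts ``is positive, so $(a_n)$ is unbounded,'' while you supply an actual bridge, either via the expectation recurrence $b_{n+4}=(b_n+b_{n+1}+b_{n+2}+b_{n+3})/3$ (whose characteristic polynomial $3x^4-x^3-x^2-x-1$ has a real root greater than $1$, or equivalently your sliding-minimum estimate) or via the earlier periodic-or-unbounded dichotomy combined with the zero-average result for periodic sequences. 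That addition is sound within the model (it does use the model's implicit independence of $d_k$ from the current window, which you rightly flag as the heuristic assumption the computational data is meant to support) and it makes explicit a step the paper leaves as an unargued heuristic leap; nothing in your write-up conflicts with the paper's argument.
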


\begin{proof}
Let $s=a_k+a_{k+1}+a_{k+2}+a_{k+4}$, so that 
\begin{align*}
a_{k+4}-\frac{a_k+a_{k+1}+a_{k+2}+a_{k+3}}{4} &=\frac{s}{2^j}-\frac{s}{4}\\
&=s\left(\frac{1}{2^j}-\frac{1}{4}\right).
\end{align*}
Then the average value is 
\[
s\sum^\infty_{i=1}\left(\frac{1}{2^i}-\frac{1}{4}\right)\cdot\frac{1}{2^i}=\frac{s}{12}.
\]
\end{proof}
Thus, on average the $k^{\text{th}}$ term is about 8.3\% larger than the average of the previous four terms.  This equates to a growth rate of about 3.3\% per term.

In order to check the reasonableness of our assumption, we calculated the first 1000 terms of each sequence beginning $a,b,c,d$ with $0<a,b,c,d<128$, and recorded the remainder mod 32.  See Figure \ref{fig:mod32}.

\begin{figure}[H]
\centering
\includegraphics[width=5in]{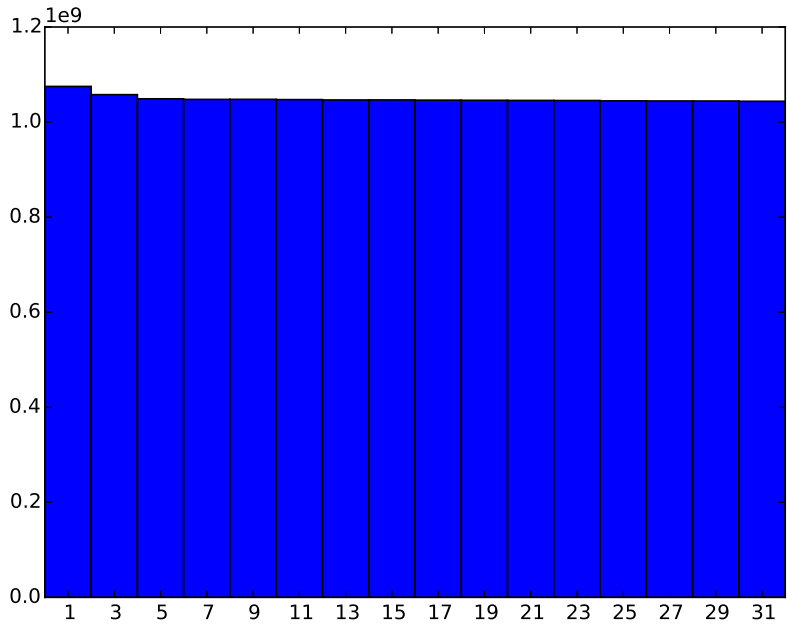}
\caption{Histogram of residues modulo 32 for $0<a,b,c,d<128$, 1000 terms each}
\label{fig:mod32}
\end{figure}

\section{Acknowledgments}
We thank Illinois College for their support of undergraduate research.  The second and fourth authors were undergraduates at IC when the bulk of this work was done.

\bibliographystyle{plain}
\bibliography{refs}










\bigskip
\hrule
\bigskip

\noindent 2010 {\it Mathematics Subject Classification}: Primary 11B39; Secondary 11B50.

\noindent \emph{Keywords: } Fibonacci number, Tetranacci number,
divisibility.

\bigskip
\hrule
\bigskip

\noindent (Concerned with sequences
\seqnum{A266295},
\seqnum{A000078},
\seqnum{A233526}, and
\seqnum{A233525}.)

\bigskip
\hrule
\bigskip

\end{document}